\documentclass[12pt]{amsart}

\usepackage{anysize}
\marginsize{3cm}{3cm}{3cm}{3cm}

 \def\res{\mathop{Res}}

 \usepackage{bbold}
 \usepackage{amsmath}
 \usepackage{tikz}
\usetikzlibrary{patterns}
\usepackage{pgfplots,caption}
\pgfplotsset{compat=1.12}
\usepgfplotslibrary{fillbetween}
\usepackage{cool}
\usepackage{color}
\renewcommand{\Re}{\operatorname{Re}}

\usepackage{amsthm}
\newtheorem{theorem}{Theorem}[section]

\newtheorem{lemma}[theorem]{Lemma}
\newtheorem{proposition}[theorem]{Proposition}
\theoremstyle{definition}

 \theoremstyle{remark}
\newtheorem*{remark}{Remark}
\usepackage{enumerate}
\usepackage[utf8]{inputenc}

\title{On a Multiple Dirichlet Series\\Associated to Binary Cubic Forms}
\author{Eun Hye Lee, Ramin Takloo-Bighash}

\begin{document}
\maketitle

\tableofcontents

\section{Introduction}

Let $X$ be the prehomogeneous vector space of binary cubic forms, and denote the standard element of $X$ by
$$f(x,y)=ax^3+bx^2y+cxy^2+dy^3.$$
Let $X^+$ be the space of binary cubic forms with $a>0$ and $b^2-3ac<0$. Geometrically, this would mean that the cubic function $y=ax^3+bx^2+cx+d$ is everywhere increasing.

Let $\Gamma_\infty=\left<\begin{pmatrix} 1&1\\0&1\end{pmatrix}\right>$ be the upper triangular unipotent elements of $\operatorname{SL}_2(\mathbb{Z})$. Then we have a left action of $\Gamma_\infty$ on $X^+$, given by $\gamma\circ f(x,y)=f((x,y)\circ \gamma^T)$.
There are four relative invariants of $X^+$ under the action of $\Gamma_\infty$, namely 
\begin{align*}
r_1(f)&=a, \\
r_2(f)&=b^2-3ac, \\
r_3(f)&=2b^3+27a^2d-9abc, \\
r_4(f)&=b^2c^2+18abcd-4ac^3-4b^3d-27a^2d^2.
\end{align*}

Note that the last one of these invariants $r_4$  is the discriminant of $f(x,y)$ and these four relative invariants are linearly independent. See Section 3.1 of \cite{MR1693411} for more information on the relative invariants. 

A coset representative for $\Gamma_\infty\setminus X^+$ is 
\begin{equation*}
\left\{(a,b,c,d)\in\mathbb{Z}^4: a>0, 0\le b\le 3a-1, b^2-3ac<0\right\}.
\end{equation*}

In order to understand the distribution of the relative invariants of binary cubic forms, we form the four variable multiple Dirichlet series
$$\sum_{f\in\Gamma_\infty \setminus X^+} \dfrac1{|r_1(f)|^{s_1}|r_2(f)|^{s_2}|r_3(f)|^{s_3}|r_4(f)|^{s_4}}.$$
At present, determining the fine analytic properties of the this zeta function appears out of reach. For this reason we consider a simpler two variable zeta function. Define a further equivalence relation on $\Gamma_\infty \backslash X^+$ by letting $f_1 \sim f_2$ if $r_1(f_1) = r_1(f_2)$ and $r_2(f_1) = r_2(f_2)$. We then set 
$$Z(s_1,s_2)= \sum_{{f\in(\Gamma_\infty \setminus X^+) / \sim  \atop r_2(f)  \text{ odd, sq. free}}} \dfrac1{|r_1(f)|^{s_1}|r_2(f)|^{s_2}}.$$

 \begin{theorem}\label{thm.main}The multiple Dirichlet series $Z(s_1,s_2)$ can be meromorphically continued to the whole of $\mathbb{C}^2$.\end{theorem}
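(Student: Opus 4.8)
\emph{Proof outline.} The idea is to unfold $Z$ against the given coset representatives, to recognize the resulting arithmetic sum as a double Dirichlet series over the family of quadratic characters $\chi_{-n}$, and then to invoke (or reprove) the analytic continuation of such a series. Writing $n:=|r_2(f)|=3ac-b^2$, note that on $X^+$ one automatically has $n>0$ and $b^2-3ac<0$, so the only effective conditions on $(a,b,c)$ are $a\ge1$, $0\le b\le 3a-1$, and $n$ odd and squarefree; moreover $r_1,r_2$ depend only on $(a,b,c)$. Since the normalization $0\le b\le 3a-1$ makes $b$ range over a complete residue system modulo $3a$, while $c=(b^2+n)/3a$ is then determined (and is automatically a positive integer once $3a\mid b^2+n$), one obtains in the region of absolute convergence
\begin{equation*}
Z(s_1,s_2)=\sum_{\substack{n\ \text{odd}\\ n\ \text{squarefree}}}\frac{D_n(s_1)}{n^{s_2}},\qquad
D_n(s_1):=\sum_{a\ge1}\frac{\rho_n(3a)}{a^{s_1}},\qquad \rho_n(m):=\#\{x\bmod m:\ x^2\equiv -n\pmod m\}.
\end{equation*}

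The next step is to identify $D_n$ with Dirichlet $L$-functions. The arithmetic function $\rho_n$ is multiplicative, and at a prime $p\nmid 2n$ one has $\rho_n(p^k)=1+\chi_{-n}(p)$ for every $k\ge1$, where $\chi_{-n}$ denotes the quadratic character attached to $\mathbb{Q}(\sqrt{-n})$. Comparing the Euler product of $D_n$ with that of $\zeta(s_1)L(s_1,\chi_{-n})/\zeta(2s_1)$ prime by prime, one checks that they agree at every prime $p\nmid 6$ — including the ramified primes dividing $n$ — so that
\begin{equation*}
D_n(s_1)=\frac{\zeta(s_1)\,L(s_1,\chi_{-n})}{\zeta(2s_1)}\,E_n(s_1),
\end{equation*}
where $E_n$ is a ratio of Euler factors supported at $p\in\{2,3\}$. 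Thus $E_n$ is holomorphic and bounded on $\{\Re s_1\ge\sigma_0\}$ for every $\sigma_0>0$; it vanishes identically exactly when $n\equiv1\pmod3$ (there being no forms with such invariants); and it depends on $n$ only through $n\bmod 24$. Splitting the $n$-sum into residue classes modulo $24$ therefore gives
\begin{equation*}
Z(s_1,s_2)=\frac{\zeta(s_1)}{\zeta(2s_1)}\sum_{r\bmod 24}E_r(s_1)\,W_r(s_1,s_2),\qquad
W_r(s_1,s_2):=\sum_{\substack{n\equiv r\ (24)\\ n\ \text{squarefree}}}\frac{L(s_1,\chi_{-n})}{n^{s_2}}.
\end{equation*}

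It then remains to continue each $W_r$, a double Dirichlet series over the quadratic characters $\chi_{-n}$, to all of $\mathbb{C}^2$ — this is by now classical (Goldfeld--Hoffstein; Diaconu--Goldfeld--Hoffstein; Friedberg--Hoffstein; Chinta--Gunnells; and others), and I would either cite it or reproduce the standard argument. That argument runs: (i) $W_r$ converges absolutely in a region containing $\{\Re s_1>1,\ \Re s_2>1\}$, using a convexity bound for $L(s_1,\chi_{-n})$; (ii) after a M\"obius inversion to remove the squarefree condition, interchanging the order of summation and applying quadratic reciprocity turns the inner sum into an $L$-function in $s_2$, which yields a functional relation exchanging $s_1$ with a translate of $s_2$ and extends $W_r$ to a strictly larger tube; (iii) the functional equation of $L(s_1,\chi_{-n})$, whose conductor $\asymp n$ contributes a factor $n^{1/2-s_1}$, gives a second relation $(s_1,s_2)\mapsto(1-s_1,\,s_2+s_1-\tfrac12)$; (iv) the group generated by these two relations acts on $\mathbb{C}^2$, and the images under this group of the tube of convergence, patched together by Bochner's tube theorem, cover $\mathbb{C}^2$. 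Substituting the continued $W_r$ back into the formula for $Z$, and using that $\zeta(s_1)$, $1/\zeta(2s_1)$ and the $E_r$ are meromorphic, gives the desired meromorphic continuation of $Z$, with polar divisors supported on an explicit list of hyperplanes (among them $s_1=1$ and $s_2=1$).

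The unfolding and the $L$-function identity are essentially bookkeeping. The genuine obstacle is step (iv): one must organize the two functional equations into an honest group action on $\mathbb{C}^2$ and show, through a Phragm\'en--Lindel\"of / Bochner tube argument, that the translated domains of holomorphy actually exhaust $\mathbb{C}^2$ rather than merely a union of half-spaces, all the while keeping careful track of the archimedean gamma factors and of the error terms produced by the squarefree sieve and by the imprimitive behavior at $2$ and $3$. This is the technical core of the multiple Dirichlet series method.
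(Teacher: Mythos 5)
Your proposal is correct and follows essentially the same route as the paper: unfold to the counting function $\#\{x \bmod 3a : x^2\equiv -n\}$, identify the Dirichlet series in $a$ with $\zeta(s_1)L(s_1,\chi_{-n})/\zeta(2s_1)$ times Euler factors at $2,3$ depending only on $n \bmod 24$, decompose over residue classes mod $24$ to reduce to double Dirichlet series of Diaconu--Goldfeld--Hoffstein type, and continue those via the summation interchange, the functional equation $(s_1,s_2)\mapsto(1-s_1,s_1+s_2-\tfrac12)$, and Bochner's tube theorem. The only cosmetic differences are that the paper detects the residue class by expanding in Dirichlet characters mod $24$ rather than indicator functions, and handles the squarefree condition by multiplying through by $L(2s_2,\chi^2)$ rather than by M\"obius inversion.
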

 
We note that, as opposed to the original Shintani zeta functions in \cite{MR289428} where one considers over all equivalence classes of integral binary cubic forms, our zeta function involves only a portion of this set.  We make some comments about the possible poles of the zeta function in section 2.2.

\

We will present the proof of this theorem in the next section.  Many mathematicians have studied multiple Dirichlet series of the form
$$\sum_{m_1,\cdots,m_n\in \mathbb{N}} \dfrac{a(m_1,\cdots,m_n)}{m_1^{s_1}m_2^{s_2}\cdots m_n^{s_n}}.$$
See, e.g., \cite{MR2041614}, \cite{MR1343325}, \cite{MR2254662}, \cite{MR788407}, and \cite{MR1224051} to name a few publications, and our proof of the theorem follows the (by now standard) techniques used in these works.

\

M. Sato and Shintani \cite{MR0344230} first defined zeta functions associated to prehomogeneous vector spaces with single relative invariants, and proved some properties including analytic continuation and functional equations. Then F. Sat\={o} undertook the study of multiple Dirichlet series associated to prehomogeneous vector spaces. See \cite{MR676121}, \cite{MR662121} and \cite{MR695661} for more details.

\

Our work in this paper is directly motivated by the thesis of Li-Mei Lim \cite{Lim} who considered multiple Dirichlet series associated to the prehomogeneous vector space of ternary quadratic forms in an attempt to generalize the result of Chinta and Offen \cite{MR2885075} on the orthogonal period of a GL$_3$ Eisenstein series.
Namely, let $X_3$ be the space of ternary quadratic forms, and let $X_3^+$ be the space of positive definite ternary quadratic forms. Also, let $P$ be the minimal parabolic subgroup in $\operatorname{SL}_3(\mathbb{Z})$. The action of $P$ on $X_3^+$ has three relative invariants, namely, 
\begin{align*}r_1(T) & =  r(s^2 - 4pq) + (pu^2 - stu + qt^2)\\r_2(T)& =  s^2 - 4pq\\r_3(T) & =  p\end{align*}
for $$T=\begin{bmatrix} 2p & s&t\\s&2q&u\\t&u&2r\end{bmatrix} \in X_3^+(\mathbb{Z}).$$

Using these relative invariants, one can form a multiple Dirichlet series:
$$Z(s_1,s_2,s_3)=\sum_{T\in P\backslash X_3^+(\mathbb{Z})}\dfrac1{| r_1(T)|^{s_1}| r_2(T)|^{s_2}| r_3(T)|^{s_3}}.$$
Lim proved that 
the $k^{-s_1}$ coefficient in the series $Z(s_1,s_2,s_3)$ is both equal to a sum of special values of the minimal parabolic $\operatorname{GL}_3$ Eisenstein series, and a linear combination of double Dirichlet series which
 arise as Fourier coefficients of the Eisenstein series on a 
  double cover of $\operatorname{GL}_3$.



\

We would like to think of our work as only the beginning of an investigation into an interesting realm of problems. For example, it would be desirable to give interpretations of the multiple Dirichlet series we consider here in terms of canonical objects of the theory of automorphic forms, e.g., Eisenstein series on higher rank groups. 

\

The authors wish to thank Gautam Chinta for useful conversations, and for his hospitality during the first author's visit to CCNY in 2018. We also benefited from conversations with Li-Mei Lim. We also wish to thank the referee and Evan O'Dorney for pointing out many inaccuracies in earlier drafts of this paper.  The second author is partially supported by a Collaboration Grant from Simons Foundation.

\section{Proof of the main theorem}

Let $f (x) = ax^3 + bx^2 y + c xy^2 + d y^3 \in X^+$. Then $|r_1(f)|=a$ and $|r_2(f)|=3ac-b^2$. 
Then 
 \begin{equation}Z(s_1,s_2)=\sum_{\substack{a,b,c\in\mathbb{Z}\\a>0\\0\le b\le 3a-1\\b^2-3ac<0, \operatorname{sq.free,odd}}}\dfrac1{a^{s_1}(-b^2+3ac)^{s_2}}.\label{eq:multdiri}\end{equation}

The following is the broad outline of the proof of Theorem \ref{thm.main}:
\begin{enumerate}
\item We reduce the multiple Dirichlet series into a finite linear combination of standard type double zeta functions;
\item we find a fairly large domain of meromorphy;
\item we find a functional equation, and use the functional equation to extend the domain of meromorphy;  
\item and finally, we use a convexity argument to get the final result.
\end{enumerate}

\subsection{Reduction of the Multiple Dirichlet Series}
\begin{lemma} We formally have 
 $$\displaystyle Z(s_1,s_2)=\sum_{\substack{m,n\in\mathbb{N}\\n\operatorname{sq.free,odd}}} \dfrac{C(3m,-n)}{m^{s_1}n^{s_2}},$$ where $$C(m,n)=\#\{x \mod m : x^2\equiv n \mod m\}.$$ \end{lemma}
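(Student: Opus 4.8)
The plan is to reindex the triple sum in \eqref{eq:multdiri} by the two quantities $m=a$ and $n=-b^2+3ac=3ac-b^2$ that actually occur in the summand, and to show that for each admissible pair $(m,n)$ the number of triples $(a,b,c)$ producing it equals $C(3m,-n)$. To begin, fix $\Re(s_1)$ and $\Re(s_2)$ large enough that the series converges absolutely (a crude count of the lattice points $(a,b,c)$ with $a$ and $3ac-b^2$ bounded suffices); this licenses all the rearrangements below, and meromorphic continuation is then carried out in the later subsections.

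Now group the sum according to the value of $a=m>0$ and of $n=3mc-b^2$. Since $a>0$, the defining inequality $b^2-3ac<0$ says precisely $n>0$, so $n$ ranges over the positive integers; and since $r_2(f)=b^2-3ac=-n$, requiring $r_2(f)$ to be odd and squarefree is the same as requiring $n$ to be odd and squarefree, matching the restriction in the claimed formula. With $m$ and $n$ now fixed, the relation $3mc=n+b^2$ shows that $c$ is uniquely determined by $b$, and that such a $c$ exists as a (necessarily positive) integer if and only if $3m\mid n+b^2$, i.e. $b^2\equiv -n\pmod{3m}$. Because $0\le b\le 3m-1$ runs over a complete residue system modulo $3m$, the number of admissible $b$ is exactly $\#\{x\bmod 3m:x^2\equiv -n\pmod{3m}\}=C(3m,-n)$. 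Each such $b$ contributes $m^{-s_1}n^{-s_2}$, so the pair $(m,n)$ contributes $C(3m,-n)\,m^{-s_1}n^{-s_2}$; summing over $m\in\mathbb{N}$ and over odd squarefree $n\in\mathbb{N}$ (the terms with $C(3m,-n)=0$ contributing nothing, as they should) yields the asserted identity.

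Every step here is an elementary bijection, so I do not expect a genuine obstacle; the points deserving a little care are checking that the congruence $b^2\equiv -n\pmod{3m}$ automatically forces $c=(n+b^2)/(3m)$ to be a positive integer (so that no constraint on $c$ is silently dropped), confirming the equivalence ``$b^2-3ac<0$ and $r_2$ odd squarefree'' $\iff$ ``$n>0$ odd and squarefree'', and recording at the outset a half-plane of absolute convergence so that the regrouping is legitimate.
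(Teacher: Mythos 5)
Your argument is correct and is essentially the paper's own proof, just written out in full: the paper's one-line justification is exactly the observation that $b^2\equiv -n\pmod{3m}$ with $b$ running over the complete residue system $0\le b\le 3m-1$, so the fiber over $(m,n)$ has size $C(3m,-n)$. The extra details you supply (that $c=(n+b^2)/(3m)$ is automatically a positive integer, and the initial half-plane of absolute convergence) are sensible bookkeeping but not a different route.
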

\begin{proof} Let $m=a$ and $n=-b^2+3ac$.  Write 
$$\displaystyle Z(s_1,s_2)=\sum_{\substack{m,n\in\mathbb{N}\\n\operatorname{sq.free,odd}}} \dfrac{a(m, n)}{m^{s_1}n^{s_2}},$$
for integers $a(m, n)$.  Then, since $b^2=3ac-n\equiv -n\mod 3m$, the coefficient $a(m, n)$  is $$\#\{x\mod 3m: x^2\equiv -n \mod 3m\}.$$
\end{proof}
The following proposition shows the explicit formulae for $C(m,n)$.

\begin{proposition} The following properties hold. \begin{enumerate}[i)]\item For any fixed n, $C(m,n)$ is a weekly multiplicative function in $m$. In particular, $C(1,n)=1$ for all $n$.
\item If any prime $p\ne 2$ and $p\nmid n$, then $C(p^{\alpha},n)=1+\left(\dfrac{n}{p}\right)$ for $\alpha>0$.
\item If $p=2$ and $n$ is odd, then for $\alpha>0$, $$C(2^{\alpha},n)=\begin{cases} 1 & \alpha=1,\\ 2 & \alpha=2, n\equiv 1 \mod 4, \\ 4 & \alpha\ge 3, n\equiv 1\mod 8, \\0 & \mbox{otherwise.}\end{cases}$$
\item If $n=p^r n_0$ with $p\nmid n_0$, then for $\alpha>0$, $$C(p^{\alpha},p^rn_0)=\begin{cases} p^{\left\lfloor\frac{\alpha}{2}\right\rfloor} & r\ge\alpha,  \\ p^{\frac{r}2}C(p^{\alpha-r},n_0) & r<\alpha, r \mbox{ even},\\ 0 & \mbox{otherwise.}\end{cases}$$ \end{enumerate}
\end{proposition}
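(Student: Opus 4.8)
The plan is to handle the four parts in sequence, each reducing to or building on the previous ones, so that the whole proposition rests on two local computations: at odd prime powers and at powers of two. For (i), I would invoke the Chinese Remainder Theorem: when $m=m_1m_2$ with $\gcd(m_1,m_2)=1$, reduction modulo $m_1$ and modulo $m_2$ sets up a bijection between the solutions of $x^2\equiv n\pmod m$ and pairs of solutions modulo $m_1$ and modulo $m_2$, which gives $C(m_1m_2,n)=C(m_1,n)C(m_2,n)$; and $C(1,n)=1$ because there is exactly one residue class modulo $1$. This reduces the remaining parts to prime power moduli.

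For (ii), with $p\neq 2$ and $p\nmid n$, I would lift solutions by Hensel's lemma. The polynomial $x^2-n$ has derivative $2x$, which is a unit modulo $p$ at every root of $x^2-n$ modulo $p$ (such a root is nonzero since $p\nmid n$), so each of the $1+\left(\dfrac{n}{p}\right)$ solutions modulo $p$ lifts uniquely to a solution modulo $p^{\alpha}$ for every $\alpha>0$; hence $C(p^{\alpha},n)=1+\left(\dfrac{n}{p}\right)$. For (iii), with $p=2$ and $n$ odd, Hensel's lemma no longer applies, so I would argue from the structure of $(\mathbb{Z}/2^{\alpha}\mathbb{Z})^{\times}$. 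The cases $\alpha=1$ and $\alpha=2$ are immediate by inspection (the squares modulo $4$ are $0$ and $1$). For $\alpha\ge 3$ I would use $(\mathbb{Z}/2^{\alpha}\mathbb{Z})^{\times}\cong\mathbb{Z}/2\mathbb{Z}\times\mathbb{Z}/2^{\alpha-2}\mathbb{Z}$: the squaring homomorphism then has kernel of order $4$ and image the index-$4$ subgroup consisting of the residues $\equiv 1\pmod 8$, so $C(2^{\alpha},n)=4$ when $n\equiv 1\pmod 8$ and $0$ otherwise.

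For (iv), with $n=p^{r}n_0$ and $p\nmid n_0$, I would classify solutions according to $v_p(x)$. If $r\ge\alpha$ the congruence becomes $x^2\equiv 0\pmod{p^{\alpha}}$, which forces $v_p(x)\ge\lceil\alpha/2\rceil$, and the number of such residues modulo $p^{\alpha}$ is $p^{\alpha-\lceil\alpha/2\rceil}=p^{\lfloor\alpha/2\rfloor}$. If $r<\alpha$, then $p^{r}n_0$ has exact $p$-adic valuation $r$ as a nonzero residue modulo $p^{\alpha}$, so any solution satisfies $2v_p(x)=r$; this is impossible when $r$ is odd, which accounts for the $0$ in the ``otherwise'' case, and when $r$ is even the substitution $x=p^{r/2}u$ turns the congruence into $u^2\equiv n_0\pmod{p^{\alpha-r}}$. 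Tracking how many residues $x$ modulo $p^{\alpha}$ correspond to a given solution $u$ modulo $p^{\alpha-r}$ produces the multiplicity $p^{r/2}$, so $C(p^{\alpha},p^{r}n_0)=p^{r/2}C(p^{\alpha-r},n_0)$, which also feeds back into (ii) and (iii) since $p\nmid n_0$.

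I expect the main obstacle to be part (iii): the dyadic prime is the one case where the naive Hensel lifting breaks down and one must instead invoke the precise group structure of $(\mathbb{Z}/2^{\alpha}\mathbb{Z})^{\times}$ (equivalently, determine exactly which odd residues are squares modulo $8$). The valuation bookkeeping in (iv) --- pinning down $v_p(x)=r/2$ and counting the lifts that contribute the factor $p^{r/2}$ --- is the other place requiring care, but it becomes routine once the valuation of a solution is forced.
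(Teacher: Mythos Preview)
Your proposal is correct and follows essentially the same architecture as the paper's proof: CRT for (i), Hensel's lemma for (ii), and the valuation bookkeeping with the substitution $x=p^{r/2}u$ for (iv). The one place you diverge is (iii): the paper argues more computationally, first noting that $x^2\equiv 1\pmod{2^\alpha}$ has exactly four solutions for $\alpha\ge 3$, then deducing that any solvable $x^2\equiv n\pmod{2^\alpha}$ has exactly four solutions, and finally invoking a generalized Hensel lemma (with $N=1$, $p=2$) to show solvability modulo $2^\alpha$ is equivalent to solvability modulo $8$. Your route via the explicit isomorphism $(\mathbb Z/2^\alpha\mathbb Z)^\times\cong \mathbb Z/2\mathbb Z\times\mathbb Z/2^{\alpha-2}\mathbb Z$ is cleaner and gets both the kernel size and the image description in one stroke; the paper's version has the mild advantage of being self-contained without quoting that structure theorem. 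Either way the content is the same.
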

\begin{proof}\begin{enumerate}[i)]\item This is true by the Chinese Remainder Theorem. In particular, if $m=p_1^{\alpha_1}p_2^{\alpha_2}\cdots p_k^{\alpha_k}$ for all $p_i$'s prime, then $$C(m,n)=\prod_i C(p_i^{\alpha_i},n),$$ hence, we only have to analyze $C(p^{\alpha},n)$ for any prime $p$, $\alpha$, and $n$.
\item By Hensel's Lemma, if $p$ is an odd prime and $p\nmid n$, $x^2\equiv n\mod p^{\alpha}$ is solvable if and only if $x^2\equiv n\mod p$ is solvable and in such cases, $x^2\equiv n\mod p^{\alpha}$ has exactly two solutions.
\item The cases of $\alpha=1$ and $\alpha =2$ are trivial. 

\

Now, let's look at the cases of $\alpha\ge3$.\newline
\begin{remark} The equation $x^2\equiv1\mod 2^{\alpha}$ for $\alpha\ge3$ has exactly four solutions, which are $1$, $-1$, $1+2^{\alpha-1}$, and $-1+2^{\alpha-1}$.\end{remark}

For $x_1$ and $x_2$ odd, if $x_1^2\equiv n\mod 2^\alpha$ and $x_2^2\equiv n\mod 2^\alpha$, then $x_1^2\equiv x_2^2\mod 2^\alpha$, hence $(x_1x_2^{-1})^2\equiv 1 \mod 2^\alpha$. By the above remark, $x_1x_2^{-1}$ will be one of the four. Hence, if there is a solution to $x^2\equiv n\mod 2^{\alpha}$, then there will be exactly four of them.

Now, let's show that $x^2\equiv n\mod 2^{\alpha}$ is solvable if and only if $x^2\equiv n \mod 8$ is solvable, which is when $n\equiv 1\mod8$.

Recall the generalized version of Hensel's Lemma. For $p$ a prime and $f(x)\in\mathbb{Z}[x]$, if $p^{2N+1}|f(a)$ and $p^N|f'(a)$ but $p^{N+1}\nmid f'(a)$, then for all $M>N$, there is unique $x_M \mod p^M$ such that $f(x_M)\equiv0\mod p^M$, and $x_M\equiv a\mod p^{N+1}$.

Take $f(x)=x^2-n$ for an odd $n$, $p=2$, $N=1$. Then if $2^3|a^2-n$, $2|2a$, and $2^2\nmid 2a$, i.e. $a$ odd and $a^2-n\equiv0\mod 8$, then for all $M>1$, there exists unique $x_M$ such that $x_M^2-n\equiv0\mod 2^M$. That is to say, $x^2\equiv n\mod 2^\alpha$ is solvable if and only if $x^2\equiv n\mod 8$ is solvable, i.e., if  $n\equiv 1\mod 8$.

\item We first show that $C(p^\alpha,0)=p^{\left\lfloor\frac\alpha2\right\rfloor}$. Write $x$ in base $p$. Then 
\begin{align*}x&=a_kp^k+a_{k+1}p^{k+1}+\cdots +a_{\alpha-1}p^{\alpha-1}\\&=p^k\left(a_k+a_{k+1}p+\cdots+a_{\alpha-1}p^{\alpha-k-1}\right)\\
x^2&=p^{2k}(*)\equiv 0\mod p^{\alpha}
\end{align*}
So we need $2k\ge\alpha$, i.e. $k\ge\left\lceil\frac\alpha2\right\rceil$. Then $x=a_{\left\lceil\frac\alpha2\right\rceil}p^{\left\lceil\frac\alpha2\right\rceil}+\cdots+a_{\alpha-1}p^{\alpha-1}$ and any of these will work. Since the number of choices for each $a_i$ is $p$, $C(p^{\alpha},0)=p^{(\alpha-1)-\left\lceil\frac\alpha2\right\rceil+1}=p^{\alpha-\left\lceil\frac\alpha2\right\rceil}=p^{\left\lfloor\frac\alpha2\right\rfloor}$.

Now, let $x^2\equiv p^rn_0\mod p^{\alpha}$ with $p\nmid n_0$. If $r\ge \alpha$, then $x^2\equiv 0 \mod p^\alpha$, so $C(p^\alpha,p^rn_0)=p^{\left\lceil\frac\alpha2\right\rceil}$. For $r<\alpha$, $x^2\equiv p^rn_0\mod p^\alpha$ shows that $r$ is even.  Write $r=2\ell$, then $x=p^\ell y$ for some $y$. Then
$x^2=p^{2\ell}y^2=p^ry^2\equiv p^rn_0\mod p^\alpha$, hence \begin{equation}y^2\equiv n_0\mod p^{\alpha-r}.\label{eq:y}\end{equation} We know the number of solutions to \eqref{eq:y} for $y$ is $C(p^{\alpha-r},n_0)$. Now we have to lift $y$'s up to mod $p^\alpha$. The number of solutions of $y$ for $y^2\equiv n_0 \mod p^\alpha$ is $p^rC(p^{\alpha-r},n_0)$. Since $x=p^\ell y$ and multiplication by $p^\ell$ is a $p^\ell$-to-1 function mod $p^\alpha$, there are $p^\ell C(p^{\alpha-r},n_0)$ many solutions of $x$ to $x^2\equiv p^rn_0 \mod p^\alpha$. 
\end{enumerate}
\end{proof}
Next, $Z(s_1,s_2)$ can be rewritten as $$\displaystyle Z(s_1,s_2)=\sum_{\substack{n=1\\n \operatorname{sq.free, odd}}}^\infty \dfrac1{n^{s_2}}\sum_{m=1}^\infty\dfrac{C(3m,-n)}{m^{s_1}}.$$ Let $\displaystyle Z_n(s)=\sum_{m=1}^\infty\dfrac{C(3m,-n)}{m^s}$.
\begin{proposition} $\displaystyle Z_n(s)=\prod_{p \mbox{ }\operatorname{prime}} Z_{n,p}(s)$, for $$Z_{n,p}(s)=\begin{cases}\displaystyle \sum_{k=0}^\infty\dfrac{C(p^k,-n)}{p^{ks}} & p\ne3, \\ \displaystyle \sum_{k=0}^\infty\dfrac{C(3^{k+1},-n)}{3^{ks}} & p=3. \end{cases}$$\end{proposition}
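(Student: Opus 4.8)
The plan is to deduce this Euler product directly from the multiplicativity of $C(\cdot,-n)$ in its first argument, established in part (i) of the preceding proposition, together with a routine absolute-convergence argument; the only genuine subtlety is the bookkeeping at $p=3$, caused by the fact that the Dirichlet series is indexed by $m$ but the arithmetic function is evaluated at $3m$.

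First I would check that $Z_n(s)$ converges absolutely for $\Re(s)$ large. For a prime $p\nmid n$, parts (ii)--(iii) give $C(p^\alpha,-n)\le 2$, while for the finitely many primes $p\mid n$ part (iv) gives $C(p^\alpha,-n)\le p^{\lfloor\alpha/2\rfloor}\le p^{\alpha/2}$. Multiplicativity then yields $C(3m,-n)\ll_{n,\varepsilon} m^{1/2+\varepsilon}$, so the series defining $Z_n(s)$ converges absolutely for $\Re(s)>3/2$, and in that half-plane every rearrangement below is legitimate; the same bounds show that each local series $Z_{n,p}(s)$ converges absolutely there as well.

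Next, write each $m\in\mathbb{N}$ uniquely as $m=3^k m'$ with $k\ge 0$ and $3\nmid m'$, so that $3m=3^{k+1}m'$ with $\gcd(3^{k+1},m')=1$. By multiplicativity of $C(\cdot,-n)$ we have $C(3m,-n)=C(3^{k+1},-n)\,C(m',-n)$, hence for $\Re(s)>3/2$
\[
Z_n(s)=\sum_{k=0}^\infty\sum_{\substack{m'\ge 1\\ 3\nmid m'}}\frac{C(3^{k+1},-n)\,C(m',-n)}{3^{ks}(m')^s}
=\left(\sum_{k=0}^\infty\frac{C(3^{k+1},-n)}{3^{ks}}\right)\left(\sum_{\substack{m'\ge 1\\ 3\nmid m'}}\frac{C(m',-n)}{(m')^s}\right).
\]
The first factor is exactly $Z_{n,3}(s)$. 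Applying multiplicativity of $C(\cdot,-n)$ once more over the prime-power factorization of $m'$ expands the second factor into $\prod_{p\ne 3}\sum_{k\ge 0}C(p^k,-n)p^{-ks}=\prod_{p\ne 3}Z_{n,p}(s)$. Combining the two pieces gives $Z_n(s)=\prod_p Z_{n,p}(s)$ for $\Re(s)>3/2$, and hence as an identity of Dirichlet series.

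I do not expect a serious obstacle here: the argument is the standard passage from a multiplicative coefficient to an Euler product. The one point that must be handled with care is the shift at $p=3$ — since the $3$-part of the index $m$ is $3^k$ whereas the relevant modulus is $3^{k+1}$, the local factor at $3$ must be defined with that extra power of $3$, which is precisely the definition of $Z_{n,3}(s)$ in the statement. Everything else reduces to the convergence estimate above.
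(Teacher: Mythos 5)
Your proof is correct and follows the same route as the paper, which simply invokes the multiplicativity of $C(\cdot,-n)$ in its first argument; you have merely spelled out the $m=3^km'$ bookkeeping at $p=3$ and the absolute-convergence justification that the paper leaves implicit. No issues.
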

\begin{proof} Trivial due to the multiplicativity of $C(m,n)$ for fixed $n$.\end{proof}
It is worth noting that the above general results are presented for the reference. We will use only the cases in which $n$ is square-free and odd.
\subsubsection{Local Euler Factors}
We explicitly compute the local Euler factors via local computations for various $p$'s and $n$'s.

\

\paragraph*{CASE 1: $p\nmid 6n$}

\begin{align*} Z_{n,p}(s)&=\sum_{k=0}^\infty\dfrac{C(p^k,-n)}{p^{ks}}=1+\sum_{k=1}^\infty\dfrac{1+\left(\frac{-n}p\right)}{p^{ks}}=1+\dfrac{p^{-s}\left(1+\left(\frac{-n}p\right)\right)}{1-p^{-s}}\\
&=\dfrac{1+p^{-s}\left(\frac{-n}p\right)}{1-p^{-s}}\cdot\dfrac{1-p^{-s}\left(\frac{-n}p\right)}{1-p^{-s}\left(\frac{-n}p\right)}=\dfrac{1-p^{-2s}}{1-p^{-s}}\cdot\dfrac1{1-p^{-s}\left(\frac{-n}p\right)}.\end{align*}
\paragraph*{CASE 2: $p=2$, $n$ odd}

\

\begin{enumerate}[i)]
\item $n\equiv 1\mod 8$, i.e., $-n\equiv 7\mod 8$:
$$Z_{n,2}(s)=\sum_{k=0}^\infty\dfrac{C(2^k,-n)}{2^{ks}}=1+\dfrac1{2^s}+\sum_{k=2}^\infty\dfrac{C(2^k,-n)}{2^{ks}}=1+\dfrac1{2^s}=\dfrac{1-2^{-2s}}{1-2^{-s}}.$$
\item $n\equiv 3\mod 8$, i.e., $-n\equiv 5\mod 8$:
$$Z_{n,2}(s)=\sum_{k=0}^\infty\dfrac{C(2^k,-n)}{2^{ks}}=1+\dfrac1{2^s}+\dfrac2{2^{2s}}+\sum_{k=3}^\infty\dfrac{C(2^k,-n)}{2^{ks}}=1+\dfrac1{2^s}+\dfrac2{2^{2s}}.$$
\item $n\equiv 5\mod 8$, i.e., $-n\equiv 3\mod 8$:
$$Z_{n,2}(s)=\sum_{k=0}^\infty\dfrac{C(2^k,-n)}{2^{ks}}=1+\dfrac1{2^s}+\sum_{k=2}^\infty\dfrac{C(2^k,-n)}{2^{ks}}=1+\dfrac1{2^s}=\dfrac{1-2^{-2s}}{1-2^{-s}}.$$
\item $n\equiv 7 \mod 8$, i.e., $-n\equiv 1\mod 8$:
\begin{align*} Z_{n,2}(s)&=\sum_{k=0}^\infty \dfrac{C(2^k,-n)}{2^{ks}}=1+\dfrac1{2^s}+\dfrac2{2^{2s}}+\sum_{k=3}^\infty \dfrac4{2^{ks}}\\
&=1+2^{-s}+2\cdot2^{-2s}+4\cdot\dfrac{2^{-3s}}{1-2^{-s}}=\dfrac{1+2^{-2s}+2\cdot2^{-3s}}{1-2^{-s}}\\
&=\dfrac{1-2^{-2s}}{1-2^{-s}}\cdot\dfrac{2\cdot2^{-2s}-2^{-s}+1}{1-2^{-s}}.\end{align*}
\end{enumerate}
\paragraph*{CASE 3: $p\ne 3$, $p|n$, i.e., $n=p^rn_0$}
\begin{enumerate}[i)]
\item $r$ is odd:
\begin{align*} Z_{n,p}(s)&=\sum_{k=0}^\infty\dfrac{C(p^k,-p^rn_0)}{p^{ks}}\\
&=\sum_{k=0}^r\dfrac{C(p^k,-p^rn_0)}{p^{ks}}+\sum_{k=r+1}^\infty\dfrac{C(p^k,-p^rn_0)}{p^{ks}}\\
&=\sum_{k=0}^r\dfrac{p^{\left\lfloor\frac{k}2\right\rfloor}}{p^{ks}}\\
&=1+\dfrac{p^0}{p^s}+\dfrac{p^1}{p^{2s}}+\dfrac{p^1}{p^{3s}}+\dfrac{p^2}{p^{4s}}+\dfrac{p^2}{p^{5s}}+\cdots+\dfrac{p^{\frac{r-1}2}}{p^{(r-1)s}}+\dfrac{p^{\frac{r-1}2}}{p^{rs}}\\
&=\left(1+\dfrac1{p^2}\right)\left(1+\dfrac{p}{p^{2s}}+\dfrac{p^2}{p^{4s}}+\cdots+\dfrac{p^{\frac{r-1}2}}{p^{(r-1)s}}\right)\\
&=\left(1+\dfrac1{p^s}\right)\dfrac{1-p^{(1-2s)\left(\frac{r+1}2\right)}}{1-p^{1-2s}}\\
&=\dfrac{1-p^{-2s}}{1-p^{-s}}\dfrac{1-p^{(1-2s)\left(\frac{r+1}2\right)}}{1-p^{1-2s}}.\end{align*}
In particular, if $r=1$, then $Z_{n,p}(s)=\dfrac{1-p^{-2s}}{1-p^{-s}}$.
\item $r$ is even:
\begin{align*}Z_{n,p}(s)&=\sum_{k=1}^\infty\dfrac{C(p^k,-p^rn_0)}{p^{ks}}\\
&=\sum_{k=0}^r\dfrac{p^{\left\lfloor\frac{k}2\right\rfloor}}{p^{ks}}+\sum_{k=r+1}^\infty\dfrac{p^{\frac{r}2}C(p^{k-r},-n_0)}{p^{ks}}\\
&=\sum_{k=0}^{r-1}\dfrac{p^{\left\lfloor\frac{k}2\right\rfloor}}{p^{ks}}+\dfrac{p^{\frac{r}2}}{p^{rs}}+\dfrac{p^{\frac{r}2}}{p^{rs}}\sum_{k=r+1}^\infty\dfrac{C(p^{k-r},-n_0)}{p^{(k-r)s}}\\
&=\sum_{k=0}^{r-1}\dfrac{p^{\left\lfloor\frac{k}2\right\rfloor}}{p^{ks}}+\dfrac{p^{\frac{r}2}}{p^{rs}}\sum_{k=0}^\infty\dfrac{C(p^k,-n_0)}{p^{ks}}\\
&=\dfrac{1-p^{-2s}}{1-p^{-s}}\cdot \dfrac{1-p^{(1-2s)\frac{r}2}}{1-p^{1-2s}}+\dfrac{p^{\frac{r}2}}{p^{rs}} Z_{n_0,p}(s)\\
&=\dfrac{1-p^{-2s}}{1-p^{-s}}\cdot\dfrac{1-p^{(1-2s)\frac{r}2}}{1-p^{1-2s}}+\dfrac{p^{\frac{r}2}}{p^{rs}}\cdot\dfrac{1-p^{-2s}}{1-p^{-s}}\cdot\dfrac1{1-p^{-s}\left(\frac{-n_0}p\right)}\\
&=\dfrac{1-p^{-2s}}{1-p^{-s}}\left(\dfrac{1-p^{(1-2s)\frac{r}2}}{1-p^{1-2s}}+\dfrac{p^{\frac{r}2}}{p^{rs}}\dfrac1{1-p^{-s}\left(\frac{-n_0}{p}\right)}\right)\\
&=\dfrac{1-p^{-2s}}{1-p^{-s}}\cdot\dfrac{\left(1-p^{(1-2s)\frac{r}2}\right)p^{rs}\left(1-p^{-s}\left(\frac{-n_0}p\right)\right)+\left(1-p^{1-2s}\right)p^{\frac{r}2}}{(1-p^{1-2s})p^{rs}\left(1-p^{-s}\left(\frac{-n_0}p\right)\right)}\\
&=\dfrac{1-p^{-2s}}{1-p^{-s}}\cdot\dfrac{\left(p^{rs}-p^{\frac{r}2}\right)\left(1-p^{-s}\left(\frac{-n_0}p\right)\right)+p^{\frac{r}2}-p^{1-2s+\frac{r}2}}{(1-p^{1-2s})p^{rs}\left(1-p^{-s}\left(\frac{-n_0}p\right)\right)}\\
&=\dfrac{1-p^{-2s}}{1-p^{-s}}\cdot\dfrac{p^{rs}-p^{rs-s}\left(\frac{-n_0}p\right)+p^{\frac{r}2-s}\left(\frac{-n_0}p\right)-p^{1-2s+\frac{r}2}}{(1-p^{1-2s})p^{rs}\left(1-p^{-s}\left(\frac{-n_0}p\right)\right)}\\
&=\dfrac{1-p^{-2s}}{1-p^{-s}}\cdot\dfrac{p^{rs}\left(1-p^{-s}\left(\frac{-n_0}p\right)\right)+p^{\frac{r}2-s}\left(\frac{-n_0}p\right)\left(1-p^{1-s}\left(\frac{-n_0}p\right)\right)}{(1-p^{1-2s})p^{rs}\left(1-p^{-s}\left(\frac{-n_0}p\right)\right)}.\end{align*}
\end{enumerate}
\paragraph*{CASE 4: $p=3$}
\begin{enumerate}[i)]
\item $3\nmid n$:
\begin{align*}
Z_{n,3}(s)=\sum_{k=0}^\infty\dfrac{C(3^{k+1},-n)}{3^{ks}}=\sum_{k=0}^\infty \dfrac{1+\left(\frac{-n}3\right)}{3^{ks}}=\left(1+\left(\dfrac{-n}3\right)\right)\dfrac1{1-3^{-s}}.
\end{align*}
In particular, if $-n\equiv 2\mod 3$, i.e. $n\equiv1\mod 3$, then $Z_{n,3}(s)=0$.
\item $n=3^rn_0$, $3\nmid n_0$, $r$ odd:
\begin{align*}
Z_{n,3}(s)&=\sum_{k=0}^\infty \dfrac{C(3^{k+1},-3^rn_0)}{3^{ks}}\\
&=\sum_{k=0}^{r-1}\dfrac{C(3^{k+1},-3^rn_0)}{3^{ks}}+\sum_{k=r}^\infty \dfrac{C(3^{k+1},-3^rn_0)}{3^{ks}}\\
&=\sum_{k=0}^{r-1}\dfrac{3^{\left\lfloor\frac{k+1}2\right\rfloor}}{3^{ks}}\\
&=1+\dfrac3{3^s}+\dfrac3{3^{2s}}+\dfrac{3^2}{3^{3s}}+\dfrac{3^2}{3^{4s}}+\cdots+\dfrac{3^{\frac{r-1}2}}{3^{(r-2)s}}+\dfrac{3^{\frac{r-1}2}}{3^{(r-1)s}}\\
&=1+\left(\dfrac3{3^s}+\dfrac3{3^{2s}}\right)\left(1+\dfrac3{3^{2s}}+\dfrac{3^2}{3^{4s}}+\cdots+\dfrac{3^{\frac{r-3}2}}{3^{(r-3)s}}\right)\\
&=1+\left(\dfrac3{3^s}+\dfrac3{3^{2s}}\right)\left(\dfrac{1-3^{(1-2s)\left(\frac{r-1}2\right)}}{1-3^{1-2s}}\right).
\end{align*}
In particular, if $r=1$, then $Z_{n,3}(s)=1$.
\item $n=3^rn_0$, $3\nmid n_0$, $r$ even:
\begin{align*}
Z_{n,3}(s)&=\sum_{k=0}^\infty \dfrac{C(3^{k+1},-3^rn_0)}{3^{ks}}\\
&=\sum_{k=0}^{r-1}\dfrac{3^{\left\lfloor\frac{k+1}2\right\rfloor}}{3^{ks}}+\sum_{k=r}^\infty \dfrac{3^{\frac{r}2}C(3^{k+1-r},-n_0)}{3^{ks}}\\
&=\left(1+\dfrac3{3^s}+\dfrac3{3^{2s}}+\dfrac{3^2}{3^{3s}}+\cdots+\dfrac{3^{\frac{r-2}2}}{3^{(r-2)s}}+\dfrac{3^{\frac{r}2}}{3^{(r-1)s}}\right)+\dfrac{3^{\frac{r}2}}{3^{rs}}\sum_{k=r}^\infty\dfrac{C(3^{k+1-r},-n_0)}{3^{(k-r)s}}\\
&=\left(1+\dfrac3{3^s}\right)\left(1+\dfrac3{3^{2s}}+\dfrac{3^2}{3^{4s}}+\cdots+\dfrac{3^{\frac{r}2-1}}{3^{(k-2)s}}\right)+\dfrac{3^{\frac{r}2}}{3^{rs}}\sum_{\ell=0}^\infty \dfrac{C(3^{\ell+1},-n_0)}{3^{\ell s}}\\
&=\left(1+3^{1-s}\right)\dfrac{1-3^{(1-2s)\frac{r}2}}{1-3^{1-2s}}+3^{\frac{r}2-rs}Z_{n_0,3}(s).
\end{align*}
\end{enumerate}

We collect these results in the following proposition: 

\begin{proposition} \label{thm.diri} Define a Dirichlet character $\eta_{-n}(p)$ by setting $\eta_{-n}(p)=\left(\dfrac{-n}p\right)$, and denote the Dirichlet $L$-function associated to $\eta_{-n}$ by $L(s,\eta_{-n})$. \begin{enumerate}[i)] \item If $n\equiv 1 \mod 3$, then $Z_n(s)=0$. \item If $n=q_1^{r_1}q_2^{r_2}\cdots q_k^{r_k}$ for $q_i\ne 2,3$ primes, $r_i \ge 1$ odd, then\begin{enumerate}\item $n\equiv 5\mod 12$:
$$Z_n(s)=\dfrac{\zeta(s)}{\zeta(2s)}L(s,\eta_{-n})(1-2^{-s})  \dfrac{2}{1-3^{-2s}}\prod_{q_i}\dfrac{1-q_i^{(1-2s)\left(\frac{r_i+1}2\right)}}{1-q_i^{1-2s}}.$$
\item $n\equiv 23\mod 24$:
$$Z_n(s)=\dfrac{\zeta(s)}{\zeta(2s)}L(s,\eta_{-n})(2\cdot2^{-2s}-2^{-s}+1)\dfrac{2}{1-3^{-2s}}\prod_{q_i}\dfrac{1-q_i^{(1-2s)\left(\frac{r_i+1}2\right)}}{1-q_i^{1-2s}}.$$
\item $n\equiv 11\mod 24$:
$$Z_n(s)=\dfrac{\zeta(s)}{\zeta(2s)}L(s,\eta_{-n})\dfrac{1-2^{-s}}{1+2^{-s}}(1+2^{-s}+2\cdot2^{-2s})\dfrac{2}{1-3^{-2s}}\prod_{q_i}\dfrac{1-q_i^{(1-2s)\left(\frac{r_i+1}2\right)}}{1-q_i^{1-2s}}.$$
\end{enumerate}
\item Define 
$$A_j(s)=\begin{cases}0 & j \equiv 1\mod 3, \\\dfrac{1}{1+3^{-s}}&j\equiv0\mod6,\\\dfrac{2}{1-3^{-2s}}& j\equiv2\mod6,\\(1-2^{-s})\dfrac{2}{1-3^{-2s}}& j\equiv5\mod 12,\\(1-2^{-s})\dfrac{1}{1+3^{-s}}&j\equiv 9\mod 12,\\\dfrac{1-2^{-s}}{1+2^{-s}}(1+2^{-s}+2\cdot2^{-2s})\dfrac{1}{1+3^{-s}}&j\equiv 3\mod 24,\\\dfrac{1-2^{-s}}{1+2^{-s}}(1+2^{-s}+2\cdot2^{-2s})\dfrac{2}{1-3^{-2s}}&j\equiv11\mod24,\\(1-2^{-s}+2\cdot2^{-2s})\dfrac{1}{1+3^{-s}}&j\equiv15\mod24,\\(1-2^{-s}+2\cdot2^{-2s})\dfrac{2}{1-3^{-2s}}&j\equiv 23\mod24.\end{cases}
$$
Then, if $n$ is square-free, 
$$Z_n(s)=\frac{\zeta(s)}{\zeta(2s)}L(s,\eta_{-n})A_n(s).$$
\end{enumerate} 
\end{proposition}
So from the above computations, letting $n$ be square-free, we conclude that 
$$
Z(s_1,s_2)=\frac{\zeta(s_1)}{\zeta(2s_1)}\sum_{n \, \operatorname{sq.free, odd}}a_n(s_1) \frac{L_{2, 3}\left(\left(\frac{-n}{\cdot} \right), s_1\right)}{n^{s_2}}.
$$
Here, $L_{2, 3}$ is a Dirichlet $L$-function with the Euler factors at $2, 3$ removed, and $a_n(s_1)$ is a function defined to correct those removed Euler factors, which depends on the residue of $n$ modulo 24.

\

Let us assume that $n$ is coprime to $6$. Let $\delta_j(n)$ to be equal to $1$ if $n \equiv j \mod 24$, otherwise equal to $0$. Then 
$$
\delta_j(n) = \frac{1}{\phi(24)} \sum_{\chi: (\mathbb Z/24\mathbb Z)^\times \to \mathbb C^\times}\chi(j)^{-1} \chi(n).
$$
Then if $\gcd(n, 24) = 1$, we have 
\begin{align*}
a_n(s)& = \sum_{ {j = 1 \atop \gcd(j, 24)=1}}^{24} A_j(s) \delta_j(n) \\
&=  \frac{1}{\phi(24)}\sum_{ {j = 1 \atop \gcd(j, 24)=1}}^{24} A_j(s)  \sum_{\chi: (\mathbb Z/24\mathbb Z)^\times \to \mathbb C^\times}\chi(j)^{-1} \chi(n) \\
&= \frac{1}{8} \sum_{ {j = 1 \atop \gcd(j, 24)=1}}^{24} \sum_{\chi: (\mathbb Z/24\mathbb Z)^\times \to \mathbb C^\times}\chi(j)^{-1} A_j(s) \chi(n).
\end{align*}
Hence
\begin{align*}
Z(s_1, s_2) &= \frac{\zeta(s_1)}{\zeta(2s_1)}\sum_{n \, \operatorname{sq.free, odd}}a_n(s_1) \frac{L_{2,3} \left(\left(\frac{-n}{\cdot} \right), s_1\right)}{n^{s_2}}\\
&= \frac{1}{8}\frac{\zeta(s_1)}{\zeta(2s_1)}\sum_{n \, \operatorname{sq.free, odd}} \frac{L_{2,3} \left(\left(\frac{-n}{\cdot} \right), s_1\right)}{n^{s_2}}\sum_{ {j = 1 \atop \gcd(j, 24)=1}}^{24} \sum_{\chi: (\mathbb Z/24\mathbb Z)^\times \to \mathbb C^\times}\chi(j)^{-1} A_j(s_1) \chi(n) \\
&= \frac{1}{8}\frac{\zeta(s_1)}{\zeta(2s_1)}\sum_{ {j = 1 \atop \gcd(j, 24)=1}}^{24} \sum_{\chi: (\mathbb Z/24\mathbb Z)^\times \to \mathbb C^\times}\chi(j)^{-1} A_j(s_1) \sum_{n \, \operatorname{sq.free, odd}} \frac{\chi(n) \cdot L_{2,3} \left(\left(\frac{-n}{\cdot} \right), s_1\right)}{n^{s_2}} . 
\end{align*}
This shows that $Z(s_1, s_2)$ is a finite linear combination of standard type multiple Dirichlet series of the form 
$$
\tilde Z_\chi(s_1,s_2):=\sum_{n \, \operatorname{sq.free, odd}} \frac{\chi(n) \cdot L_{2,3} \left(\left(\frac{-n}{\cdot} \right), s_1\right)}{n^{s_2}}  
$$
and this will be analyzed in the following section.

\subsection{Domain of Meromorphy}
In this section, we adapt the methods of Diaconu, Goldfeld and Hoffstein from \cite{MR2041614}.

\

Let $w,s_1,s_2,\cdots,s_m\in\mathbb{C}$ with $\Re(w)>1, \Re(s_i)>1$ for $i=1,\cdots , m$. Consider the absolutely convergent multiple Dirichlet series
$$Z(w, s_1,s_2,\cdots,s_m)=\sum_d\dfrac{\chi(d)L(s_1,\chi_d)\cdots L(s_m,\chi_d)}{|d|^w}$$
where $\chi(d)$ is some Dirichlet character and $$\chi_d(n)=\begin{cases} \left(\dfrac{n}d\right) & n \text{ odd}\\ (-1)^{\frac{d^2-1}8} & n=2\\ \operatorname{sign}(d) & n=-1.\end{cases}$$
To see where the poles and residues of $Z(w,s_1,\cdots,s_m)$ are, let us define an adjusted multiple Dirichlet series:
$$Z_{\nu}^{\pm}(w,s_1,\cdots,s_m)=\sum_{\substack{\pm d>0\\d\equiv \nu \mod 4\\d:\operatorname{sq.free}\\{{3\nmid d}}}}\dfrac{\chi(d)L(s_1,\chi_d)\cdots L(s_m,\chi_d)}{|d|^w}.$$
We set $$Z^{\pm}(w,s_1,\cdots,s_m)=Z_1^{\pm}(w,s_1,\cdots,s_m)+Z_3^{\pm}(w,s_1,\cdots,s_m).$$
\begin{proposition}\label{prop:mero} For $\sigma>0$, $Z^{\pm}$ can be continued meromorphically to the domain  $\Re(w)>1+m\left(\frac12+\sigma\right)$, and $\Re(s_i)>-\sigma$ for $i=1,\cdots,m$.  In this region, the only poles are at $s_i=1$ for $i=1,\cdots,m$.\end{proposition}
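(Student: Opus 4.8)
I would follow the Diaconu--Goldfeld--Hoffstein scheme: produce a finite collection of functional equations relating the members of the family $\{Z_\nu^{\pm}\}$, and then invoke a convexity (tube-domain) principle to fill in the asserted region.

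First I would record the obvious region of absolute convergence. For $\Re(w)>1$ and $\Re(s_i)>1$ every inner value $L(s_i,\chi_d)$ is $O(1)$ uniformly in $d$, so $\sum_d |d|^{-\Re(w)}$ controls the series and $Z^{\pm}$ is holomorphic there. In this region the only singular contribution is the single term $d=1$, where $\chi_1$ is trivial and the summand is $\prod_i\zeta(s_i)$; this is the source of the poles along $s_i=1$, and everything else will have to be shown to produce no new poles.

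The core of the argument is the functional equation in each $s_i$. Writing $\chi_d$ through its primitive inducing character, whose conductor is $|d|$ when $d\equiv1\bmod4$ and $4|d|$ when $d\equiv3\bmod4$, with real Gauss-sum root number $\pm1$, one rewrites $L(s_i,\chi_d)$ as $L(1-s_i,\chi_d)$ times the archimedean ratio $\pi^{s_i-1/2}\,\Gamma\!\left(\tfrac{1-s_i}{2}\right)/\Gamma\!\left(\tfrac{s_i}{2}\right)$ (the odd analogue when $d<0$), the conductor power $|d|^{1/2-s_i}$, and $2$-adic correction terms built from $\chi_d(2)$ and $\chi_d(-1)$. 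It is exactly to absorb these last two quantities that $\chi_d$ was extended to the arguments $2$ and $-1$ and that the family was broken up according to $d\bmod4$; the outcome is an identity expressing each $Z_\nu^{\pm}(w,s_1,\dots,s_m)$ as a fixed archimedean factor times a finite $\mathbb{C}$-linear combination of $Z_{\nu'}^{\pm}(w+s_i-\tfrac12,\,s_1,\dots,1-s_i,\dots,s_m)$, the shift $w\mapsto w+s_i-\tfrac12$ coming from $|d|^{1/2-s_i}/|d|^{w}$. Iterating over subsets of $\{1,\dots,m\}$ one obtains, from the polydisc of absolute convergence, holomorphy of (linear combinations of) the $Z_\nu^{\pm}$ on each of $2^m$ tube domains; reflecting all $m$ variables gives holomorphy on $\{\Re(s_i)<0,\ \Re(w)>1+m/2-\sum_i\Re(s_i)\}$, which already reaches $\Re(w)>1+m(\tfrac12+\sigma)$ at $\Re(s_i)=-\sigma$. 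The convex hull of these $2^m$ tubes contains $\{\Re(w)>1+m(\tfrac12+\sigma),\ \Re(s_i)>-\sigma\}$, so Bochner's tube theorem yields a single meromorphic continuation there. For the poles: each reflected $s_i$ contributes the factor $\Gamma(\tfrac{1-s_i}{2})/\Gamma(\tfrac{s_i}{2})$, with poles at $s_i=1,3,5,\dots$, but for $d>1$ the trivial zeros of the reflected $L(1-s_i,\chi_d)$ cancel those at $s_i=3,5,\dots$; only the $d=1$ term — whose reflected factor is $\zeta(1-s_i)$, nonzero at $s_i=1$ — retains a pole, at $s_i=1$. One checks the $2$-adic correction factors and the convergent $d$-sums introduce nothing further.

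I expect the main obstacle to be the bookkeeping in the functional-equation step: verifying that the $2$-adic and archimedean contributions genuinely close the family $\{Z_\nu^{\pm}\}$ under reflection, with all root numbers, conductors, and congruence conditions mod $4$ tracked correctly, and that the quadratic-reciprocity signs match up so that the linear combinations appearing on the two sides are consistent. A secondary point requiring care is confirming that the convex hull of the tubes produced by these reflections really does exhaust $\{\Re(w)>1+m(\tfrac12+\sigma),\ \Re(s_i)>-\sigma\}$, and that the Bochner continuation does not hide poles on lower-dimensional strata where several of the defining expressions overlap.
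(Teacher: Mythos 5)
Your outline is correct in substance, but it is not the route the paper takes, and in fact the two arguments continue the series in different directions. The paper's proof never invokes the functional equation in the $s_i$: it interchanges the order of summation, writes $n_1\cdots n_m=2^c nN^2M^2$, and recognizes the inner $d$-sum as a linear combination of Dirichlet series $L_{6M}(w,\chi_2^c\chi_n\chi)$ in the variable $w$; two embedded lemmas then continue $L(2w,\chi^2)$ times this inner sum to $\Re(w)>0$, with a pole at $w=1$ only when the relevant character is trivial (essentially, when $n_1\cdots n_m$ is a square). That buys continuation past $\Re(w)=1$ and locates the $w$-pole that the subsequent residue computation (Proposition 2.6) consumes, but by itself it does not reach the stated region $\Re(s_i)>-\sigma$, $\Re(w)>1+m(\tfrac12+\sigma)$: for that one must control $L(s_i,\chi_d)$ in the conductor aspect, which is exactly what your reflection $L(s_i,\chi_d)\rightsquigarrow |d|^{\frac12-s_i}L(1-s_i,\chi_d)$ accomplishes, and your accounting of the poles ($d=1$ contributes $\prod_i\zeta(s_i)$; for $d>1$ the trivial zeros of $L(1-s_i,\chi_d)$ cancel the poles of the $\Gamma$-ratio) is right. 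Two remarks. First, for this proposition you do not need the family $\{Z_\nu^{\pm}\}$ to close exactly under reflection, nor Bochner's theorem: the Phragm\'en--Lindel\"of consequence of the functional equation, $|L(s_i,\chi_d)|\ll_{s_i,\epsilon}|d|^{\frac12+\sigma+\epsilon}$ for $\Re(s_i)\ge-\sigma$, already gives absolute convergence of the $d$-sum term by term in the asserted region, so the root-number and $2$-adic bookkeeping you flag as the main obstacle can be postponed to the later functional-equation step. Second, if you do argue via tubes, you must clear the polar divisor (e.g.\ multiply by $\prod_i(s_i-1)$) before applying the tube theorem and verify, as you note, that the product region lies in the convex hull of the $2^m$ reflected tubes; both checks go through but are not automatic.
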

Before proving the statement, it is worth noting that we do not find the exact poles, but the above Proposition \ref{prop:mero} can be used to precisely determine the poles of $\tilde Z_\chi(s_1,s_2)$. Since $Z(s_1,s_2)$ is a linear combination of $\tilde Z_\chi (s_1,s_2)$, the set of poles of $Z(s_1,s_2)$ is a subset of that of $Z^*(s_1,s_2)$. 
\begin{proof}We can rearrange the series and see the following:\newline$\displaystyle Z_1^\pm(w,s_1,\cdots,s_m)=\sum_{\substack{\pm d>0\\d\equiv1\mod4\\d:\operatorname{sq.free}\\3\nmid d}}\dfrac{\chi(d)L(s_1,\chi_d)\cdots L(s_m,\chi_d)}{|d|^w}\\ \phantom{Z_1^\pm(s_1,\cdots,s_m,w)}=\sum_{\substack{\pm d>0\\d\equiv 1 \mod 4\\d:\operatorname{sq.free}\\3\nmid d}}\dfrac{\chi(d)}{|d|^w}\prod_{j=1}^m\sum_{n_j=1}^\infty\dfrac{\chi_d(n_j)}{n_j^{s_j}}\\\phantom{Z_1^\pm(s_1,\cdots,s_m,w)}=\sum_{n_1,\cdots,n_m}\dfrac1{n_1^{s_1}\cdots n_m^{s_m}}\sum_{\substack{\pm d>0\\d\equiv 1\mod 4\\d:\operatorname{sq.free}\\3\nmid d}}\dfrac{\chi(d)\chi_d(n_1n_2\cdots n_m)}{|d|^w}.$\newline For any fixed $m$-tuple of positive integers, $\{n_1,\cdots,n_m\}$,  we can write $$n_1\cdots n_m=2^cnN^2M^2$$ with the following properties:
\begin{itemize}\item $n$: square-free, \item If $p|N$, then $p|n$,\item $\gcd(M,n)=1$, \item $n$, $N$, $M$ odd.\end{itemize} Hence, the inner summation becomes:\newline
$\displaystyle \sum_{\substack{\pm d>0\\d\equiv 1\mod 4\\d:\operatorname{sq.free}\\3\nmid d}}\dfrac{\chi(d)\chi_d(n_1n_2\cdots n_m)}{|d|^w}=\sum_{\substack{\pm d>0\\d\equiv 1\mod 4\\d:\operatorname{sq.free}\\3\nmid d\\(d,M)=1}}\dfrac{\chi(d)\chi_d(2)^c\chi_d(n)}{|d|^w}
=\sum_{\substack{\pm d>0\\d\equiv 1\mod 4\\d\operatorname{sq.free}\\3\nmid d\\(d,M)=1}}\dfrac{\chi(d)\chi_2(d)^c\chi_n(d)}{|d|^w}
\\\phantom{\sum_{\substack{\pm d>0\\d\equiv 1\mod 4\\d:\operatorname{sq.free}\\3\nmid d}}\dfrac{\chi(d)\chi_d(n_1n_2\cdots n_m)}{|d|^w}}=\dfrac12\sum_{\substack{\pm d>0\\d\operatorname{sq.free}\\3\nmid d\\(d,M)=1\\2\nmid d}}\dfrac{\chi_2(d)^c\chi_n(d)\chi(d)}{|d|^w}+\dfrac12\sum_{\substack{\pm d>0\\d\operatorname{sq.free}\\3\nmid d\\(d,M)=1\\2\nmid d}}\dfrac{\chi_2(d)^c\chi_{-1}(d)\chi_n(d)\chi(d)}{|d|^w}.$
Before we continue we state and prove a lemma. Let $\psi$ be a Dirichlet character of conductor $n$. Define$$L_b(w,\psi)=\sum_{\substack{d>0\\d\operatorname{sq.free}\\(d,b)=1}}\dfrac{\psi(d)}{d^w}.$$

\begin{lemma}
For a Dirichlet character $\psi$ mod $n$, $L(2w,\psi^2)L_b(w,\psi)$ is holomorphic on $\Re(w)>0$ except when $\psi$ is trivial mod $n$, in which case, there is a unique pole at $w=1$ with residue $$\displaystyle \prod_{p|b}(1+p^{-1})^{-1}\prod_{p|n}(1-p^{-1})\prod_{p|n}(1-p^{-2}).$$
\end{lemma}
\begin{lemma}
For a Dirichlet character $\psi$ mod $n$, $L(2w,\psi^2)L_b(w,\psi)$ is holomorphic on $\Re(w)>0$ except when $\psi$ is trivial mod $n$, in which case, there is a unique pole at $w=1$ with residue $$\displaystyle \prod_{p|b}(1-p^{-1})^{-1}\prod_{p|n}(1-p^{-1})\prod_{p|n}(1-p^{-2}).$$
\end{lemma}
\begin{proof}
For a primitive $\psi_1 \mod \tilde{n}$, which is extended to $\psi\mod n$, we have 
\begin{align*} L_b(w,\psi)&=L_b(w,\psi_1)\prod_{p|n}(1+\psi_1(p)p^{-w})\\
&=\dfrac{L(w,\psi_1)}{L(2w,\psi_1^2)}\prod_{p|b}(1+\psi_1(p)p^{-w})^{-1}\prod_{p|n}(1+\psi_1(p)p^{-w}).\end{align*}
Since $\displaystyle L(2w,\psi_1^2)=L(2w,\psi^2)\prod_{p|n}\dfrac1{1-\psi_1^2(p)p^{-2w}}$, 
\begin{align*}L(2w,\psi^2)L_b(w,\psi)=L(w,\psi_1)\prod_{p|b}(1+\psi_1(p)p^{-w})^{-1}\prod_{p|n}(1+\psi_1(p)p^{-w})
\\ \times\prod_{p|n}(1-\psi_1(p)^2p^{-2w})\end{align*}
has a pole at $w=1$ if and only if $\psi_1$ is trivial. Since $\psi_1$ is primitive, in order to have a pole, $\tilde{n}=1$. Hence, the residue of $L(2w,\psi^2)L_b(w,\psi)$ at $w=1$ is $$\displaystyle \prod_{p|b}(1+p^{-1})^{-1}\prod_{p|n}(1-p^{-1})\prod_{p|n}(1-p^{-2}).$$
\end{proof}
\

We now return to the proof of Proposition \ref{prop:mero}.

\

For $$\displaystyle Z(w):=\sum_{\substack{\pm d>0\\d\equiv1\mod 4\\d\operatorname{sq.free}\\3\nmid d}}\dfrac{\chi(d)\chi_d(n_1\cdots n_m)}{|d|^w}=\dfrac12L_{6M}(w,\chi_2^c\chi_n\chi)+\dfrac12L_{6M}(w,\chi_2^c\chi_{-1}\chi_n\chi),$$ 
$L(2w,\chi^2)Z(w)$ is holomorphic on $\Re (w)>0$ and meromorphic with a pole at $w=1$ if and only if either $\chi_2^c\chi_n\chi$ or $\chi_2^c\chi_{-1}\chi_n\chi$ is trivial. In particular, if $\chi$ is not quadratic, then there is no pole for $L(2w,\chi^2)Z(w)$. Note that $\chi_2^c\chi_n\chi$ and $\chi_2^c\chi_{-1}\chi_n\chi$ cannot be both trivial.
\end{proof}

\begin{proposition}\label{thm.res}
For a Dirichlet character $\chi$,  we have 
$$\res_{s_1=\frac12}\res_{s_2=1}L(2,\chi^2)\tilde Z_\chi(s_1,s_2)=\dfrac13\prod_{p|\operatorname{cond}\chi}(1-p^{-1})\prod_{p\ne2,3}\dfrac{1-\frac{\chi^2(p)}{p^2}+\frac{\chi^2(p)}{p^{2s_1+2}}-\frac1{p^{2s_1+1}}}{1-\frac{\chi^2(p)}{p^2}},$$
provided that the primitive part of $\chi$ is trivial. Otherwise, there is no pole.
\end{proposition}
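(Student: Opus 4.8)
The plan is to turn $Z^*$ around and treat $s_2$ as the ``outer'' variable by expanding the inner $L$-function. Writing $L_{2,3}\!\left(\left(\frac{-n}{\cdot}\right),s_1\right)=\sum_{\gcd(k,6)=1}\left(\frac{-n}{k}\right)k^{-s_1}$ with the Kronecker symbol and interchanging the two sums,
\begin{equation*}
L(2s_2,\chi^2)\,Z^*(s_1,s_2)=\sum_{\substack{k\ge 1\\ \gcd(k,6)=1}}\frac{L(2s_2,\chi^2)\,D_k(s_2)}{k^{s_1}},\qquad D_k(s_2):=\sum_{n\text{ sq.free}}\frac{\chi(n)\left(\frac{-n}{k}\right)}{n^{s_2}}.
\end{equation*}
Since $\left(\frac{-n}{k}\right)=\left(\frac{-1}{k}\right)\left(\frac{n}{k}\right)$ and $n\mapsto\lambda_k(n):=\left(\frac{n}{k}\right)$ is a real Dirichlet character modulo $k$, principal exactly when $k$ is a perfect square, the coefficient of $n^{-s_2}$ in $D_k$ is $\left(\frac{-1}{k}\right)$ times $(\chi\lambda_k)(n)$ supported on squarefree $n$. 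Just as in the reciprocity computation of the previous subsection, $D_k(s_2)=\left(\frac{-1}{k}\right)L(s_2,\chi\lambda_k)/L(2s_2,(\chi\lambda_k)^2)$, so $L(2s_2,\chi^2)D_k(s_2)$ equals $\left(\frac{-1}{k}\right)L(s_2,\chi\lambda_k)$ times the finite product $\prod_{p\mid k}(1-\chi^2(p)p^{-2s_2})^{-1}$, which is holomorphic and non-vanishing at $s_2=1$.

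Next I would locate the pole in $s_2$. By the two lemmas of the previous subsection applied to $\psi=\chi\lambda_k$, the function $L(2s_2,(\chi\lambda_k)^2)D_k(s_2)$ is holomorphic near $s_2=1$ unless $\chi\lambda_k$ is a principal character, in which case it has a simple pole there. Since $\operatorname{cond}\chi\mid 24$ while the conductor of $\lambda_k$ divides $k$ and is prime to $6$, the product $\chi\lambda_k$ can be principal only when $\chi$ itself is principal \emph{and} $k=\ell^2$ with $\gcd(\ell,6)=1$; this is precisely the ``otherwise there is no pole'' assertion. Assuming the primitive part of $\chi$ is trivial, $\chi\lambda_{\ell^2}$ is principal (with the support restrictions of $\chi$ together with coprimality to $\ell$), $\left(\frac{-1}{\ell^2}\right)=1$, and $D_{\ell^2}(s_2)=\frac{\zeta(s_2)}{\zeta(2s_2)}\prod_{p\mid\ell}(1+p^{-s_2})^{-1}\cdot(\text{factors at }2,3)$; cancelling the bad Euler factors of $L(2s_2,\chi^2)$ against those of $\zeta(2s_2)$ and of $D_{\ell^2}$ then gives $\res_{s_2=1}L(2s_2,\chi^2)D_{\ell^2}(s_2)$ of the form $(\text{constant depending only on }2,3)\cdot\prod_{p\mid\ell}(1+p^{-1})^{-1}$, the constant already carrying the eventual factor $\tfrac13$ produced at the primes $2$ and $3$ (removed in $L_{2,3}$ but present in $L(2s_2,\chi^2)$ and in the modulus of $\chi$).

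Finally I would sum over $\ell$. One first checks that the non-square $k$ contribute nothing to $\res_{s_2=1}$: bounding $L(s_2,\chi\lambda_k)$ by convexity (or via the estimates of the previous subsection, following \cite{MR2041614}) shows $\sum_{k\text{ non-square}}D_k(s_2)k^{-s_1}$ converges locally uniformly for $\Re(s_2)$ near $1$ and $\Re(s_1)$ large, hence is holomorphic at $s_2=1$; and $\sum_\ell D_{\ell^2}(s_2)\ell^{-2s_1}$ converges locally uniformly in a punctured neighbourhood of $s_2=1$ once $\Re(s_1)>\tfrac12$, so the residue may be taken term by term. Summing the multiplicative function $\ell\mapsto\prod_{p\mid\ell}(1+p^{-1})^{-1}$ against $\ell^{-2s_1}$ produces an Euler product whose factor at $p\ne2,3$ simplifies, after clearing denominators, to
\begin{equation*}
1-\frac{p^{-2s_1}}{p+1}=\frac{1-\chi^2(p)p^{-2}+\chi^2(p)p^{-2s_1-2}-p^{-2s_1-1}}{1-\chi^2(p)p^{-2}}\qquad(\text{using }\chi^2(p)=1\text{ for }p\ne2,3),
\end{equation*}
times a residual $\zeta(2s_1)$-type factor supported away from $2,3$. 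Taking $\res_{s_1=1/2}$ of that $\zeta$-type factor and collecting the remaining constants at $2$ and $3$ yields the right-hand side of the proposition.

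The step I expect to be the main obstacle is the accounting at the primes $2$ and $3$: keeping careful track of which Euler factors are deleted by $L_{2,3}$, of the reciprocity sign $\left(\frac{-1}{k}\right)$, and of the factors coming from $L(2s_2,\chi^2)$ and from the modulus of $\chi$, so that the surviving numerical constant is exactly $\tfrac13$. By comparison, the interchange of the $k$-summation with the two residue operations is routine given the analytic input (the $L_b$-lemmas and convexity) already available in the paper.
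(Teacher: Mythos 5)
Your proposal is correct and follows essentially the same route as the paper: interchange the $n$- and $k$-summations, apply the squarefree-sum identity $\sum_{n\,\mathrm{sq.free}}\psi(n)n^{-w}=L(w,\psi)/L_{(\cdot)}(2w,\psi^2)$ (the paper's Lemma on $L_m$) so that multiplying by $L(2s_2,\chi^2)$ leaves only a finite Euler product at $p\mid k$, observe that the residue at $s_2=1$ survives exactly when $k$ is a square and the primitive part of $\chi$ is trivial, and then sum the resulting multiplicative function over squares to extract the $\zeta(2s_1)$-type factor whose pole at $s_1=\tfrac12$ produces the stated residue. The only difference is cosmetic bookkeeping (your $D_k$, $\lambda_k$ notation and the explicit convexity justification for interchanging sums and residues, which the paper leaves implicit).
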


In order to prove above proposition, we will need the following lemma:
\begin{lemma}\label{mini}
For a character $\psi(n) \operatorname{mod} N$, $\displaystyle \sum_{\substack{n\operatorname{sq.free}\\(n,m)=1}}\dfrac{\psi(n)}{n^w}=\dfrac{L(w,\psi)}{L_m(2w,\psi^2)}$ for $m|\operatorname{cond}\psi$.
\end{lemma}
\begin{proof} We have 
\begin{align*}
\sum_{\substack{n\operatorname{sq.free}\\(n,m)=1}}\dfrac{\psi(n)}{n^w} & =\prod_{p\nmid m}\left(1+\dfrac{\psi(p)}{p^w}\right) \\ & =\prod_{p\nmid m}\dfrac{(1-\psi(p)p^{-w})^{-1}}{(1-\psi(p)^2p^{-2w})^{-1}} \\
& =\dfrac{L(w,\psi)}{L_m(2w,\psi^2)}.
\end{align*}
\end{proof}
\begin{proof}[Proof of Proposition \ref{thm.res}]
Using Lemma \ref{mini}, we get the follwoing:
\begin{align*}
\tilde Z_\chi(s_1,s_2)&=\sum_{n \operatorname{sq.free}} \dfrac{\chi(n)L_{2,3}\left(s_1,\left(\frac{-n}{\cdot}\right)\right)}{n^{s_2}}\\
&=\sum_{n\operatorname{sq.free,odd}} \dfrac{\chi(n)}{n^{s_2}}\sum_{\substack{m=1\\\gcd(6,m)=1}}^\infty \dfrac{\left(\frac{-n}m\right)}{m^{s_1}}\\
&=\sum_{\substack{m=1\\\gcd(6,m)=1}}^\infty \dfrac{\left(\frac{-1}m\right)}{m^{s_1}}\sum_{\substack{n\operatorname{sq.free}\\\gcd(n,m)=1}}\dfrac{\chi(n)\left(\frac{n}m\right)}{n^{s_2}}\\
&=\sum_{\substack{m=1\\\gcd(6,m)=1}}^\infty \dfrac{\left(\frac{-1}m\right)}{m^{s_1}}\dfrac{L\left(s_2,\chi(\cdot)\left(\frac{\cdot}m\right)\right)}{L_m(2s_2,\chi^2)}.
\end{align*}
Hence, we have the following identity:
$$L(2s_2,\chi^2)\tilde Z_\chi(s_1,s_2)=\sum_{\substack{m=1\\(6,m)=1}}^\infty\dfrac{\left(\frac{-1}m\right)}{m^{s_1}}\dfrac{L(2s_2,\chi^2)}{L_m(2s_2,\chi^2)}L\left(s_2,\chi(\cdot)\left(\frac{\cdot}m\right)\right).$$
Since $\displaystyle \dfrac{L(2s_2,\chi^2)}{L_m(2s_2,\chi^2)}=\prod_{p|m}\left(1-\dfrac{\chi^2(p)}{p^{2s_2}}\right)^{-1}$, we get
$$L(2s_2,\chi^2)\tilde Z_\chi(s_1,s_2)=\sum_{\substack{m=1\\\gcd(6,m)=1}}^\infty \dfrac{\left(\frac{-1}m\right)}{m^{s_1}}\prod_{p|m}\left(1-\dfrac{\chi^2(p)}{p^{2s_2}}\right)^{-1}L\left(s_2,\chi(\cdot)\left(\frac{\cdot}m\right)\right).$$
Now, if $m$ is a square, then $$\res_{s_2=1}L\left(s_2,\chi(\cdot)\left(\dfrac{\cdot}m\right)\right)=\res_{s_2=1}L\left(s_2,\chi(\cdot)\mathbb{1}_m\right)=\prod_{p|m}(1-p^{-1})\prod_{p|\operatorname{cond}\chi}(1-p^{-1})$$
and if $m$ is not a square, $$\res_{s_2=1}L\left(s_2,\chi(\cdot)\left(\dfrac{\cdot}m\right)\right)=0,$$ since $\operatorname{cond}\chi(\cdot)\left(\dfrac{\cdot}m\right)=\underbrace{\operatorname{cond}\chi(\cdot)}_{| 24}\underbrace{\operatorname{cond}\left(\dfrac{\cdot}m\right)}_{|m}\ne1$.
Therefore,
\begin{align*}
&\res_{s_2=1}L(2s_2,\chi^2)\tilde Z_\chi(s_1,s_2)\\
&=\sum_{\substack{m=1\\\gcd(6,m)=1\\m\operatorname{square}}}^\infty\dfrac{\displaystyle \prod_{p|m}\left(1-\dfrac{\chi^2(p)}{p^2}\right)^{-1}\prod_{p|m}(1-p^{-1})\prod_{p|\operatorname{cond}\chi}(1-p^{-1})}{m^{s_1}}\\
&=\prod_{p|\operatorname{cond}\chi}(1-p^{-1})\sum_{\substack{m=1\\\gcd(6,m)=1\\m\operatorname{square}}}^\infty \dfrac{\displaystyle \prod_{p|m}\left(1-\frac{\chi^2(p)}{p^2}\right)^{-1}(1-p^{-1})}{m^{s_1}}\\
&=\prod_{p|\operatorname{cond}\chi}(1-p^{-1})\prod_{p\ne2,3}\left(1+\left(1-\dfrac{\chi^2(p)}{p^2}\right)^{-1}(1-p^{-1})\sum_{k=1}^\infty p^{-2ks_1}\right)\\
&=\prod_{p|\operatorname{cond}\chi}(1-p^{-1})\prod_{p\ne2,3}\left(1+(1-p^{-1})\left(1-\dfrac{\chi^2(p)}{p^2}\right)^{-1}\dfrac{p^{-2s_1}}{1-p^{-2s_1}}\right).
\end{align*}
Here, the inner product can be analyzed as follows:
\begin{align*}
\prod_{p\ne2,3}\left(1+\dfrac{1-p^{-1}}{1-\frac{\chi^2(p)}{p^2}}\frac{p^{-2s_1}}{1-p^{-2s_1}}\right)&=\prod_{p\ne2,3}\dfrac{\left(1-\frac{\chi^2(p)}{p^2}\right)(1-p^{-2s_1})+(1-p^{-1})p^{-2s_1}}{\left(1-\frac{\chi^2(p)}{p^2}\right)(1-p^{-2s_1})}\\
&=\prod_{p\ne2,3}\dfrac{1-\frac{\chi^2(p)}{p^2}+\frac{\chi^2(p)}{p^{2s_1+2}}-\frac1{p^{2s_1+1}}}{1-\frac{\chi^2(p)}{p^2}} \prod_{p\ne2,3}(1-p^{-2s_1})^{-1}.
\end{align*}
Recall that the product $\displaystyle \prod_n (1+a_n)$ converges absolutley if $\displaystyle\sum_n|a_n|<\infty$.
For $\Re s_1>0$, since
\begin{align*}
\sum_{p\ne2,3}\left|-\dfrac{\chi^2(p)}{p^2}+\dfrac{\chi^2(p)}{p^{2s_1+2}}-\dfrac1{p^{2s_1+1}}\right|\le\sum_{p\ne2,3}\dfrac1{p^2}+\sum_{p\ne 2,3} \dfrac1{p^{2\Re s_1+2}}+\sum_{p\ne2,3}\dfrac1{p^{2\Re s_1+1}}<\infty
\end{align*} and $$\displaystyle \sum_{p\ne2,3}\left|\dfrac{\chi^2(p)}{p^2}\right|\le \sum_{p\ne2,3}\dfrac1{p^2}<\infty,$$ we conclude
$$\prod_{p\ne2,3}\dfrac{1-\frac{\chi^2(p)}{p^2}+\frac{\chi^2(p)}{p^{2s_1+2}}-\frac1{p^{2s_1+1}}}{1-\frac{\chi^2(p)}{p^2}}$$
converges absolutely. Hence, the only pole of $\displaystyle \res_{s_2=1}L(2s_2,\chi^2)\tilde Z_\chi(s_1,s_2)$ comes from $$\displaystyle \prod_{p\ne2,3}(1-p^{-2s_1})^{-1}=(1-2^{-2s_1})(1-3^{-2s_1})\zeta(2s_1)$$ and the pole is at $s_1=\frac12$.

\

Finally, 
$$\res_{s_1=\frac12}\res_{s_2=1}L(2s_2,\chi^2)\tilde Z_\chi(s_1,s_2)=\dfrac13\prod_{p|\operatorname{cond}\chi}(1-p^{-1})\prod_{p\ne2,3}\dfrac{1-\frac{\chi^2(p)}{p^2}+\frac{\chi^2(p)}{p^{2s_1+2}}-\frac1{p^{2s_1+1}}}{1-\frac{\chi^2(p)}{p^2}}.$$
This concludes the proof of Proposition \ref{thm.res}.
\end{proof}

\subsection{Functional Equation}
Recall the fuctional equation of $L(s,\chi)$ for $\chi$ a primitive Dirichlet character mod $k$. Define 
\begin{equation}\Lambda(s,\chi):=\left(\dfrac\pi{k}\right)^{-\frac{s+a}2}\Gamma\left(\frac{s+a}2\right)L(s,\chi),\label{eq.func}\end{equation} where $$a=\begin{cases}0 & \mbox{ if } \chi(-1)=1\\1 & \mbox{ if } \chi(-1)=-1\end{cases}.$$ For $\displaystyle \tau(\chi)=\sum_{n=1}^\infty \chi(n)\exp(2\pi i n/k)$, we have the following functional equation:
$$\Lambda(1-s,\overline{\chi})=\dfrac{i^a k^{1/2}}{\tau(\chi)}\Lambda(s,\chi).$$
It is a well-known (see, e.g., Exercise 21 on page 45 of \cite{MR1625181}) that for $\chi$ a real primitive character modulo $k$ where $k>2$, 
\begin{equation}\label{eq.gauss}
\tau(\chi)=\begin{cases} \sqrt{k} & \text{ if } \chi(-1)=1\\ i\sqrt{k} & \text { if } \chi(-1)=-1\end{cases}.
\end{equation}
Now, 
\begin{align*}
L_{2,3}\left(\left(\dfrac{-n}{\cdot}\right),s\right)&=\sum_{\substack{(m,6)=1\\(m,n)=1}}\dfrac{\left(\frac{-n}{m}\right)}{m^s}=\sum_{m=1}^\infty\dfrac{\left(\frac{-n}{m}\right)\mathbb{1}_6(m)}{m^s}\\
&=\sum_{m=1}^\infty\dfrac{(-1)^{\frac{m-1}2}\left(\frac{n}m\right)\mathbb{1}_6(m)}{m^s}=\sum_{m=1}^\infty\dfrac{\chi_4(m)\left(\frac{n}m\right)\mathbb{1}_6(m)}{m^s}\\
&=\sum_{m=1}^\infty\dfrac{\chi_4(m)\left(\frac{n}m\right)\mathbb{1}_3(m)}{m^s}.
\end{align*}
In this case, if $m$ is even, then due to the presence of $\chi_4(m)$, the whole term disappears: take only odd $m$'s. Now, letting $n=2^cN$ for $N$ odd, 
$$\left(\dfrac{n}m\right)=\left(\dfrac{2^cN}{m}\right)=\left(\dfrac2m\right)^c\left(\dfrac{N}m\right)=(-1)^{\frac{m^2-1}8\cdot c}(-1)^{\frac{N-1}2\frac{m-1}2}\left(\dfrac{m}N\right).$$
This gives the conductor of $\left(\dfrac{n}m\right)=\begin{cases}8N & \mbox{ if } c\,\, \operatorname{odd}\\4N & \mbox{ if }c\,\,\operatorname{even}\end{cases}$, so that the conductor of $\chi_4(m)\left(\dfrac{n}m\right)\mathbb{1}_3(m)$ is $\begin{cases}3\cdot 8N& c\,\,\operatorname{odd}\\3\cdot 4N& c\,\,\operatorname{even}\end{cases}$.

\

Now we apply the above functional equation to the function 
$$\displaystyle \tilde Z_\chi (s_1,s_2)=\sum_{{n\operatorname{sq.free,odd}}}\dfrac{\chi(n)L_{2,3}\left(\left(\frac{-n}{\cdot}\right),s_1\right)}{n^{s_2}}
=\sum_{\substack{n\operatorname{sq.free,odd}}}\dfrac{\chi(n)L(\psi_n,s_1)}{n^{s_2}}$$ for $\psi_n$ primitive Dirichlet character mod $3\cdot4n$.

From \eqref{eq.func}, since $$\psi_n(-1)=\chi_4(-1)\cdot\left(\dfrac{n}{-1}\right)\cdot\mathbb{1}_3(-1)=-1\cdot \left(\dfrac{n}{4n-1}\right)\cdot 1=-1,$$ $a_n=1$, and we can define
\begin{equation}\Lambda(s_1,\psi_n):=\left(\dfrac\pi{12n}\right)^{-\frac{s_1+1}2}\Gamma\left(\dfrac{s_1+1}2\right)L(\psi_n,s_1).\label{def.Lambda}\end{equation}
Finally, the functional equation for $\Lambda$ is:
$$\Lambda(1-s_1, \overline{\psi_n})=\dfrac{i\cdot (12n)^{1/2}}{\tau(\psi_n)}\Lambda(s_1, \psi_n),$$
for $\tau(\psi_n)=\displaystyle \sum_{\ell=1}^{12n} \psi_n(\ell)\exp(2\pi i \ell/12n)$.

Since $\psi_n$ is real, $\overline{\psi_n}=\psi_n$. Hence,
$$\Lambda(1-s_1,\psi_n)=\dfrac{i\cdot (12n)^{1/2}}{\tau(\psi_n)}\Lambda(s_1,\psi_n).$$
Since $\tau(\psi_n)=i\cdot(12n)^{1/2}$ from \eqref{eq.gauss}, we have the following functional equation:
\begin{equation}\Lambda(1-s_1,\psi_n)=\Lambda(s_1,\psi_n).\label{eq.fct}\end{equation}

Using the definition of $\Lambda$ in \eqref{def.Lambda} and the functional equation in \eqref{eq.fct}, we get
$$\left(\dfrac{\pi}{12n}\right)^{-\frac{1-s_1+1}2}\Gamma\left(\dfrac{1-s_1+1}2\right)L(\psi_n,1-s_1)=\left(\dfrac{\pi}{12n}\right)^{-\frac{s_1+1}2}\Gamma\left(\dfrac{s_1+1}2\right)L(\psi_n,s_1).$$ Hence,
$$L(\psi_n,s_1)=\left(\dfrac{\pi}{12n}\right)^{s_1-\frac12}\dfrac{\Gamma\left(1-\frac{s_1}2\right)}{\Gamma\left(\frac{s_1+1}2\right)}L(\psi_n,1-s_1).$$
Going back to  $\tilde Z_\chi(s_1,s_2)$,
\begin{align*}
\tilde Z_\chi(s_1,s_2)=&\sum_{\substack{n\operatorname{sq.free,odd}}}\dfrac{\chi(n)L(\psi_n,s_1)}{n^{s_2}}\\
=&\sum_{\substack{n\operatorname{sq.free,odd}}}\dfrac{\chi(n)\left(\dfrac{\pi}{12n}\right)^{s_1-\frac12}\dfrac{\Gamma\left(1-\frac{s_1}2\right)}{\Gamma\left(\frac{s_1+1}2\right)}L(\psi_n,1-s_1)}{n^{s_2}}\\
=&\left(\dfrac{\pi}{12}\right)^{s_1-\frac12}\dfrac{\Gamma\left(1-\frac{s_1}2\right)}{\Gamma\left(\frac{s_1+1}2\right)}\tilde Z_\chi\left(1-s_1,s_1+s_2-\frac12\right).
\end{align*}

\subsection{Conclusion}
From the sections 4.2 and 4.3, we get the domain of meromorphy of 
\begin{itemize}
\item $\Re s_1>0, \Re s_2\gg 0$ and
\item $\Re s_1>\frac32, \Re s_1+\Re s_2>\frac32$.
\end{itemize}

\begin{center}

\begin{tikzpicture}[scale=1.2]
    \begin{axis}[
        xmin=-2.9, xmax=5.9,
        ymin=-2.9, ymax=5.9,
        axis lines=middle,
        yticklabel=\empty,
        xlabel=$\sigma_1$,ylabel=$\sigma_2$,label style =
               {at={(ticklabel cs:1.1)}}
      ]  
      \addplot[gray, samples=100, domain=0:5.7, name path=A] {3}; 
      \path[name path=xaxis] (\pgfkeysvalueof{/pgfplots/xmin}, 0) -- (\pgfkeysvalueof{/pgfplots/xmax},0);
      \path[name path=max] (\pgfkeysvalueof{/pgfplots/xmin},\pgfkeysvalueof{/pgfplots/ymax} ) -- (\pgfkeysvalueof{/pgfplots/xmax},\pgfkeysvalueof{/pgfplots/ymax});
      \path[name path=second] (1.5,0) -- (4.4,-2.9) -- (5.9,-2.9);
 \path[name path=hull] (0,3) -- (1.5,0);
      \addplot[pattern color=blue!40, pattern=north west lines] fill between[of=A and max, soft clip={domain=0:5.7}];
      \addplot[pattern color=red!40, pattern=north east lines] fill between[of=max and second, soft clip={domain=1.5:4.4}];
      \addplot[pattern color=red!40, pattern=north east lines] fill between[of=max and second, soft clip={domain=4.4:5.7}];
      \addplot[pattern color=yellow, pattern=horizontal lines] fill between[of=A and hull, soft clip={domain=0:1.5}];
     \addplot [gray, mark=none, name path=C] coordinates {(1.5, 0) (1.5, 5.9)};
     \addplot [gray, mark=none, name path=D] coordinates {(1.5, 0) (4.4,-2.9)};
     \addplot [purple, mark=none, name path=E] coordinates {(0,3) (1.5,0)};
     \node[left] at (0,3) {$M$};

    \end{axis}
  \end{tikzpicture}

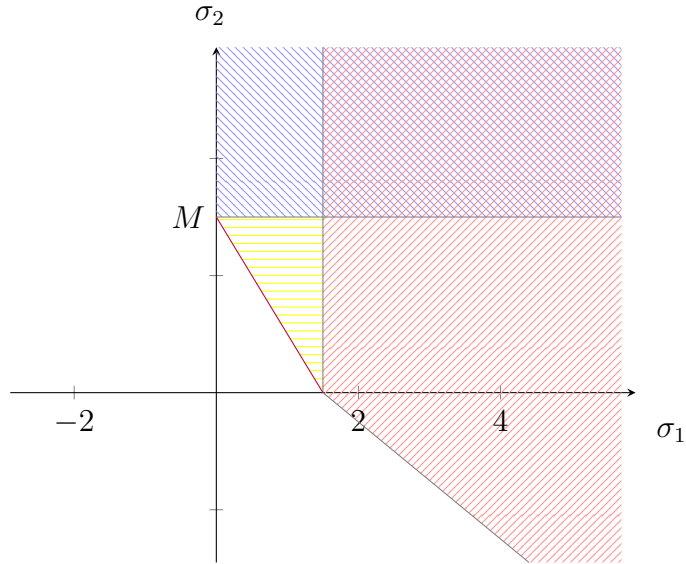
\captionof{figure}{Domain of meromorphy}
\end{center}
Next, we transform this domain using the transformation
$$(s_1,s_2)\mapsto \left(1-s_1,s_1+s_2-\frac12\right)$$
to obtain the following figure:

\bigskip

\begin{center}
\begin{tikzpicture}[scale=1.2]
    \begin{axis}[
        xmin=-2.9, xmax=5.9,
        ymin=-2.9, ymax=5.9,
        axis lines=middle,
        yticklabel=\empty,
        xlabel=$\sigma_1$,ylabel=$\sigma_2$,label style =
               {at={(ticklabel cs:1.1)}}
      ]  
      \addplot[gray, samples=100, domain=0:5.7, name path=A] {3}; 
      \path[name path=xaxis] (\pgfkeysvalueof{/pgfplots/xmin}, 0) -- (\pgfkeysvalueof{/pgfplots/xmax},0);
      \path[name path=max] (\pgfkeysvalueof{/pgfplots/xmin},\pgfkeysvalueof{/pgfplots/ymax} ) -- (\pgfkeysvalueof{/pgfplots/xmax},\pgfkeysvalueof{/pgfplots/ymax});
      \path[name path=second] (1.5,0) -- (4.4,-2.9) -- (5.9,-2.9);
 \path[name path=hull] (0,3) -- (1.5,0);

      \addplot[pattern color=gray!40, pattern=north west lines] fill between[of=A and max, soft clip={domain=0:5.7}];
      \addplot[pattern color=gray!40, pattern=north east lines] fill between[of=max and second, soft clip={domain=1.5:4.4}];
      \addplot[pattern color=gray!40, pattern=north east lines] fill between[of=max and second, soft clip={domain=4.4:5.7}];
      \addplot[pattern color=gray!40, pattern=horizontal lines] fill between[of=A and hull, soft clip={domain=0:1.5}];

     \addplot [gray, mark=none, name path=C] coordinates {(1.5, 0) (1.5, 5.9)};
     \addplot [gray, mark=none, name path=D] coordinates {(1.5, 0) (4.4,-2.9)};
     \addplot [gray, mark=none, name path=E] coordinates {(0,3) (1.5,0)};
     \addplot [red, mark=none, name path=F] coordinates {(1,5.9) (1,2.5)};
     \addplot [red, mark=none, name path=G] coordinates { (-0.5,1) (1,2.5)};
     \addplot [red, mark=none, name path=H] coordinates {(-2.9,1) (-0.5,1)};
    \addplot[pattern color=red!60, pattern=north east lines] fill between[of=max and G, soft clip={domain=-.5:1}];
    \addplot[pattern color=red!60, pattern=north east lines] fill between[of=max and H, soft clip={domain=-2.9:-.5}];
     \node[left] at (0,3) {$M$};
    \end{axis}
  \end{tikzpicture}
  
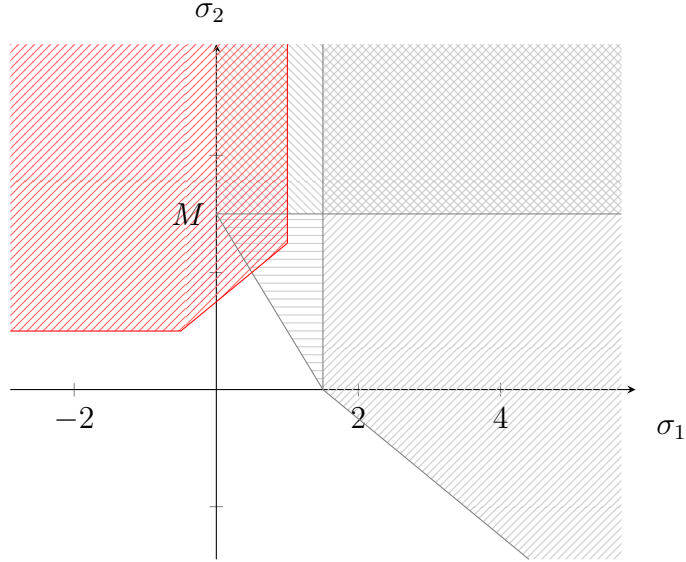
\captionof{figure}{After applying the functional equation}
\end{center}
Now, we can use the properties of tube domains \cite[Theorem 2.5.10]{MR1045639} to conclude that  our $\tilde Z_\chi(s_1,s_2)$ can be extended to a function on the convex hull, which is the whole $\mathbb{C}^2$. 

The fact that $Z(s_1,s_2)$ is a finite linear combination of $\tilde Z_\chi(s_1,s_2)$ concludes the proof of Theorem \ref{thm.main}.
  \bibliographystyle{plain}
 \bibliography{paper}

\end{document}